\theoremstyle{plain}
\newtheorem{thm}{Theorem}
\newtheorem{lem}{Lemma}
\newtheorem{prop}{Proposition}
\theoremstyle{remark}
\newtheorem{remark}{Remark}
\begin{document}
\title{The special unitary groups $SU(2n)$ as framed manifolds} 
\author{Haruo Minami}
\address{H. Minami: Professor Emeritus, Nara University of Education}
\email{hminami@camel.plala.or.jp}
\subjclass[2020]{22E46, 55Q45}
\begin{abstract}
Let $[SU(2n), \mathscr{L}]$ denote the bordism class of $SU(2n)$ $(n\ge 2)$ equipped with its left invariant framing $\mathscr{L}$. Then it is well known that $e_\mathbb{C}([SU(2n), \mathscr{L}])=0$ where $e_\mathbb{C}$ denotes the complex Adams $e$-invariant. In this note we show that replacing $\mathscr{L}$ by the framing obtained by twisting it by a specific 
map the zero value of $e_\mathbb{C}([SU(2n), \mathscr{L}])$ can be transformed into a generator of $\mathrm{Im} \, e_\mathbb{C}$ which is isomorphic to a cyclic group. In addition we show that the same procedure affords an analogous result for a quotient of $SU(2n+1)$ by a circle subgroup which inherits a canonical framing from $SU(2n+1)$ in the usual way. 
\end{abstract}

\maketitle

\section{Introduction}

Let $G$ be a simply-connected compact Lie group of dimension $4\mathit{l}-1$ 
and of rank $\ge 2$. Let us consider it to be the framed manifold equipped with the left invariant framing $\mathscr{L}$ and write $[G, \mathscr{L}]$ for its bordism class in 
$\pi_{4\mathit{l}-1}^S$. Then it is well known ~\cite{AS} that 
\[e_\mathbb{C}([G, \mathscr{L}])=0\] 
where $e_\mathbb{C} : \pi_{4\mathit{l}-1}^S \to \mathbb{Q}/\mathbb{Z}$ 
denotes the complex Adams $e$-invariant. In view of this result we wish to raise the problem of the existence of a map $f : G\to GL(s, \mathbb{R})$ 
such that $e_\mathbb{C}([G, \mathscr{L}^f])$ is nonzero and if possible, is equal to 
$(-1)^{l-1}B_l/2l$  ~\cite{A} where $\mathscr{L}^f$ denotes the framing obtained by twisting $\mathscr{L}$ by $f$ (\!\!~\cite{AGP},\!~\cite{K},\!~\cite{S}) and $B_l$ the $l$th Bernoulli number.

In this note we consider the case $G=SU(2n)$ with dimension $4n^2-1$
$(n\ge 2)$. The main result is 
the following theorem.
\begin{thm} Let $\rho : SU(2n)\to GL(4n, \mathbb{R})$ $(n\ge 2)$ be 
the standard real representation of $SU(2n)$. Then we have
\[
e_\mathbb{C}\bigl([SU(2n), \mathscr{L}^{(n-1)\rho}]\bigr)=(-1)^{n-1}B_{n^2}/2n^2
\]
\end{thm}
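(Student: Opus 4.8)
The plan is to reduce the left-hand side to a single characteristic number of $SU(2n)$ and then evaluate it. Twisting the left invariant framing by a map $f\colon SU(2n)\to GL(s,\mathbb{R})\simeq O(s)$ changes the underlying class in $\pi^{S}_{4n^{2}-1}$ by the image of $[f]\in\widetilde{KO}^{-1}(SU(2n))$ under the framing-dependent $J$-homomorphism, and on the level of $e_\mathbb{C}$ this contributes a correction of the form $\langle\Theta(f),[SU(2n)]\rangle\bmod\mathbb{Z}$, where $\Theta(f)$ is a rational cohomology class built from $f$ by a multiplicative (Todd/$\hat{A}$-type) expression and $[SU(2n)]$ is the fundamental class. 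Because the untwisted class already satisfies $e_\mathbb{C}([SU(2n),\mathscr{L}])=0$, the whole invariant is carried by this correction; so the first step is to establish such a change-of-framing formula (as provided by the twisting constructions of ~\cite{AGP},~\cite{K},~\cite{S}) and to record that the Bernoulli numbers will enter through the coefficients of the Todd-type class $\Theta$.

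Next I would make the pairing explicit. On the cohomology side, $H^{*}(SU(2n);\mathbb{Z})=\Lambda(\beta_{3},\beta_{5},\dots,\beta_{4n-1})$ with $\deg\beta_{2k-1}=2k-1$, and the top class is $\beta_{3}\beta_{5}\cdots\beta_{4n-1}$, of degree $\sum_{k=2}^{2n}(2k-1)=4n^{2}-1=\dim SU(2n)$; this single monomial is the class against which $\Theta$ is evaluated, and it pairs to a unit with $[SU(2n)]$. On the bundle side the input is the Chern character of the standard representation: writing $u\in K^{1}(SU(2n))$ for its class, one has $\mathrm{ch}(u)=\sum_{k=2}^{2n}\frac{(-1)^{k-1}}{(k-1)!}\beta_{2k-1}$ up to normalization, so each homogeneous piece of $\mathrm{ch}(u)$ is a single primitive generator. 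The role of the multiplicity $n-1$ is that, $\Theta$ being multiplicative, the top-degree component of $\Theta\bigl((n-1)\rho\bigr)$ is assembled as a product containing one generator of each degree $3,5,\dots,4n-1$; it is exactly in forming this product that the factorials $1/(k-1)!$ from $\mathrm{ch}(u)$, the multinomial factor from the $n-1$ copies, and the Bernoulli coefficients of $\Theta$ combine.

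The main computation is then to extract the coefficient of $\beta_{3}\beta_{5}\cdots\beta_{4n-1}$ in $\Theta\bigl((n-1)\rho\bigr)$ and to identify it, modulo $\mathbb{Z}$, with $(-1)^{n-1}B_{n^{2}}/2n^{2}$. I expect this to be the main obstacle. First one must see why the index of the Bernoulli number is $n^{2}$ and the denominator involves $2n^{2}$: since $\dim SU(2n)=4n^{2}-1=4l-1$ forces $l=n^{2}$, the value is constrained to lie in the cyclic group $\mathrm{Im}\,e_\mathbb{C}$ generated by Adams' element $(-1)^{l-1}B_{l}/2l$ of ~\cite{A}, which for $l=n^{2}$ reads $(-1)^{n-1}B_{n^{2}}/2n^{2}$ because $n^{2}\equiv n\pmod{2}$. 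Second, and harder, one must show that the integer multiplier produced by the factorial-and-sign bookkeeping — after complexifying the real twist, so that the conjugate representation and the signs of the components of $\mathrm{ch}(u)$ are accounted for — is a unit modulo the order of $\mathrm{Im}\,e_\mathbb{C}$, so that the specific choice $(n-1)\rho$ yields a generator rather than a proper multiple. Verifying this unit/sign collapse, i.e.\ that the a priori messy rational number reduces modulo $\mathbb{Z}$ to precisely $(-1)^{n-1}B_{n^{2}}/2n^{2}$, is where the essential difficulty lies; the reduction to the characteristic number and the manipulations in the cohomology ring of $SU(2n)$ are comparatively formal.
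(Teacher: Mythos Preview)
Your route is genuinely different from the paper's, and as written it leaves the decisive step undone. The paper does not compute on $SU(2n)$ via a change-of-framing characteristic-number formula at all. It passes to the quotient $G/S$ by a specific circle subgroup $S$, forms the associated complex line bundle $E$, and invokes Proposition~2.1 of L\"offler--Smith, which expresses $e_\mathbb{C}([S(E),\Phi_E])$ as a Bernoulli expression governed by $\langle c_1(E)^{2n^2-1},[G/S]\rangle$. The substance of the paper (Lemmas~1--3) is the construction of an explicit degree-one map
\[
\psi\colon (S^2)^{2n^2-n}\times T^{n-1}\longrightarrow G/S
\]
under which $E$ pulls back to an external tensor product of Hopf line bundles; the Chern number then factors as a product of elementary evaluations, and the L\"offler--Smith formula outputs $(-1)^{n-1}B_{n^2}/2n^2$ directly, with no arithmetic collapse needed. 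The multiplicity $n-1$ arises geometrically as the number of torus factors $T_k$ in the model, and the final identification $\Phi_E=\mathscr{L}^{(n-1)\rho}$ comes from the fact that each $T_k$ contributes one trivialization by $\rho$.

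By contrast, you stop exactly where the content begins. You correctly note that the answer must lie in the cyclic group $\mathrm{Im}\,e_\mathbb{C}$ generated by $(-1)^{n-1}B_{n^2}/2n^2$, but then asserting that the integer multiplier is a unit modulo the order of that group \emph{is} the theorem; invoking it is circular. Your account of how the top class $\beta_3\beta_5\cdots\beta_{4n-1}$ (a product of $2n-1$ odd primitives) is reached is also not justified: $(n-1)\rho$ has Chern character $(n-1)\,\mathrm{ch}(u)$, still a \emph{sum} of primitives, and the claim that ``$\Theta$ being multiplicative, the top-degree component is assembled as a product containing one generator of each degree'' does not follow from multiplicativity of any standard Todd/$\hat A$-type class applied to a twist in $K^{-1}$; nor does it explain why the coefficient should be $n-1$ rather than any other integer. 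Unless you write down the precise reframing formula and actually carry the factorial/Bernoulli bookkeeping through to a specific rational number, there is no proof here --- and that computation, done honestly in $H^*(SU(2n))$, is at least as hard as what the paper's geometric construction is designed to circumvent.
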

The proof is based on Proposition 2.1 of ~\cite{LS}. But also in the case 
$G=SU(2n+1)$ it can be seen that the procedure used there is adaptable to 
its quotient by a circle subgroup $C$, considered as a framed manifold equipped with the induced framing $\mathscr{L}_C$. Now this framing can be obtained by applying the fact that $\mathscr{L}$ is a $C$-equivariant framing ~\cite{K} to 
the equation $(\ast)$ of  ~\cite[p. \!\!16]{LS}. Let 
$S^1=\{z\in\mathbb{C}| |z|=1\}$ and put 
\[
C=\{\mathrm{diag}(1, \ldots, 1, z, \bar{z})\mid z\in S^1\}
\]
where 
$\mathrm{diag}(a_1, \ldots, a_{2n+1})$ denotes the diagonal matrix with 
diagonal entries $a_1, \ldots, a_{2n+1}$. Then we have
\begin{prop} 
$e_\mathbb{C}\bigl([SU(2n+1)/C, \mathscr{L}_C]\bigr)
=-B_{n^2+n}/2(n^2+n)$\qquad $(n\ge 1)$.
\end{prop}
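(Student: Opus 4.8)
The plan is to run, for the homogeneous space $SU(2n+1)/C$, the argument that establishes the Theorem, the essential new input being the $C$-equivariance of the left invariant framing $\mathscr{L}$ of $SU(2n+1)$ under right translation by $C$ (see \cite{K}). First I would make $\mathscr{L}_C$ explicit. Write $p\colon SU(2n+1)\to SU(2n+1)/C$ for the principal $C$-bundle projection and $L\to SU(2n+1)/C$ for the complex line bundle associated to the defining character of $C$. Since $C$ is a circle the vertical tangent bundle of $p$ is canonically trivial, so that $p^{*}\tau(SU(2n+1)/C)\oplus\underline{\mathbb{R}}\cong\tau(SU(2n+1))$; substituting the $C$-equivariant trivialization $\mathscr{L}$ into the equation $(\ast)$ of \cite[p.~16]{LS} then descends to the stable framing $\mathscr{L}_C$ on the base. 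What matters here is not mere existence but the exact stable bundle data produced by $(\ast)$ — the class of $L$ together with the residual trivialization — since this is precisely what the $e$-invariant formula will consume.

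Second, I would apply the analogue of Proposition 2.1 of \cite{LS} to reduce the computation to a single characteristic number. That proposition expresses $e_\mathbb{C}$ of a (twisted, here induced) framing as the pairing of a Chern character with the fundamental class, taken modulo $\mathbb{Z}$: the framing determines a class $\xi\in\widetilde{K}^{-1}(SU(2n+1)/C)$, and one obtains
\[
e_\mathbb{C}\bigl([SU(2n+1)/C,\mathscr{L}_C]\bigr)\equiv\bigl\langle \mathrm{ch}(\xi),[SU(2n+1)/C]\bigr\rangle \pmod{\mathbb{Z}}.
\]
Here $\xi$ is built from the associated bundle of the standard representation restricted to $C$. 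The weights of $C$ on $\mathbb{C}^{2n+1}$ are $0$ (with multiplicity $2n-1$), $+1$ and $-1$, so that associated bundle splits as $\underline{\mathbb{C}}^{\,2n-1}\oplus L\oplus L^{-1}$, whose only nontrivial Chern root is $c=c_{1}(L)$. This accounting is what fixes the top degree at $4(n^{2}+n)-1$ and forces the Bernoulli index to be $n^{2}+n$.

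Third, I would evaluate the pairing through the Gysin sequence of $C\to SU(2n+1)\xrightarrow{p} SU(2n+1)/C$, whose connecting map is $\cup\,c$ and whose fibre integration $p_{!}$ satisfies $\langle\,\alpha,[SU(2n+1)/C]\,\rangle=\langle\,p^{*}\alpha\cup\theta,[SU(2n+1)]\,\rangle$ for the fibre class $\theta$. Using the known exterior-algebra structure $H^{*}(SU(2n+1))=\Lambda(x_{3},x_{5},\dots,x_{4n+1})$, this turns the pairing into a coefficient extraction (equivalently a residue in $c$) governed by the generating function $\frac{c}{e^{c}-1}=\sum_{k\ge 0}B_{k}\,c^{k}/k!$, which collapses the sum to the single Bernoulli number $B_{n^{2}+n}$. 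Matching signs and normalisations should yield exactly $-B_{n^{2}+n}/2(n^{2}+n)$; since $n^{2}+n$ is even this is $(-1)^{n^{2}+n-1}B_{n^{2}+n}/2(n^{2}+n)$, the expected generator of $\mathrm{Im}\,e_\mathbb{C}$ in dimension $4(n^{2}+n)-1$.

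The main obstacle I anticipate is in this third step. Unlike in the Theorem, where the multiple $(n-1)\rho$ is chosen by hand precisely so as to land on the generator, here the framing $\mathscr{L}_C$ is forced by the geometry, so one must show that the characteristic number equals $-B_{n^{2}+n}/2(n^{2}+n)$ exactly and not merely a rational multiple of it. This amounts to pinning the integer coefficient to $1$, which requires controlling the interaction between the Euler class $c$ and the residual framing coming from $(\ast)$ — exactly the bookkeeping that the $C$-equivariance of $\mathscr{L}$ is there to keep consistent.
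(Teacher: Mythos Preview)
Your proposal misidentifies the input to Proposition~2.1 of \cite{LS}. That proposition takes a \emph{framed even-dimensional} base $M^{4l-2}$ together with a complex line bundle $E\to M$ and returns $e_\mathbb{C}$ of the \emph{total space} $S(E)$ with its induced framing $\Phi_E$, expressed through powers of $c_1(E)$ paired against $[M]$. In your Step~2 you place $SU(2n+1)/C$ in the role of $M$: but $SU(2n+1)/C$ is odd-dimensional, and with your bundle $L\to SU(2n+1)/C$ the sphere bundle $S(L)$ is $SU(2n+1)$, so the proposition would compute $e_\mathbb{C}\bigl([SU(2n+1),\Phi_L]\bigr)$, not $e_\mathbb{C}\bigl([SU(2n+1)/C,\mathscr{L}_C]\bigr)$. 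The formula $e_\mathbb{C}\equiv\langle\mathrm{ch}(\xi),[SU(2n+1)/C]\rangle$ you write down, with $\xi$ produced by the framing alone, is not what \cite{LS} supplies and is not a general identity for the $e$-invariant; without a coboundary or an $S(E)$-description there is no such direct characteristic-number expression on the manifold itself. Consequently your Gysin-sequence Step~3, whatever it computes, is not hooked to $e_\mathbb{C}$ of $(SU(2n+1)/C,\mathscr{L}_C)$.

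The missing idea is a \emph{second} circle. The paper keeps the circle $S\subset SU(2n+1)$ used throughout Section~2, observes $S\cap C=\{I_{2n+1}\}$ so that $S\times C$ is a subgroup, and then views $SU(2n+1)/C$ as the total space of the principal $S$-bundle $G/C\to G/(S\times C)$. Now Proposition~2.1 applies with even-dimensional base $G/(S\times C)$ and associated line bundle $E$, giving $e_\mathbb{C}\bigl([SU(2n+1)/C,\mathscr{L}_C]\bigr)$ as a Chern-number on $G/(S\times C)$. That number is then evaluated not by a Gysin argument but by pushing forward along the degree-one map $\tilde r\circ\psi:(S^2)^{(m^2-m)/2}\times T^{n-1}\to G/(S\times C)$ built in Section~3 (Lemma~3 supplying injectivity on a dense open set), so that $c_1(E)^{2(n^2+n)-1}$ factors over the product model $L^{(m^2-m)/2}\boxtimes\tilde L^{\,n-1}$ and the answer $-B_{n^2+n}/2(n^2+n)$ drops out exactly as in the Theorem's proof with $m=2n+1$. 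Your outline contains neither ingredient: without the auxiliary $S$ you cannot invoke \cite{LS}, and without the explicit product model you have no mechanism that pins the coefficient to~$1$ --- the very obstacle you flag at the end remains unaddressed.
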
  

\section{Tensor product decomposition of $E$}

Let $G=SU(m)$ $(m\ge 2)$ and $S$ be the circle subgroup of $G$  
generated by $d(z)=\mathrm{diag}(\bar{z}^{m-1}, z, \ldots, z)$ with 
$z\in S^1$. Regard $G$ as an $S$-space endowed with $d(z)$ 
acting on $g\in G$ by the rule $(g, z)\to gd(z)$ and consider it to be the principal bundle along with the natural projection $p : G\to G/S$.  Let $E=G\times_S\mathbb{C}$ denote the canonical complex line bundle over $G/S$ associated to $p$ where $S$ acts on $\mathbb{C}$ as $S^1$. Then its unit 
sphere bundle $\pi : S(E)\to G/S$ is naturally isomorphic to $p : G\to G/S$ as 
a principal $S$-bundle over $G/S$, which is expressed as 
$(S(E), \pi, G/S)\cong (G, p, G/S)$ as usual.

In order to introduce some further notations we first recall the case $m=2$. Write  
\begin{equation*}
R(rz, v)=\begin{pmatrix} rz & v \\
-\bar{v} & r\bar{z} \end{pmatrix}, \quad \ r\ge 0, \ z\in S^1, \ v\in\mathbb{C},  
\end{equation*}
for the elements of $SU(2)$. Then $R(rz, v)d(z)=R(r, zv)$ and so letting it correspond to the element $(1-2r^2, 2rzv)\in S^2$ we have a homeomorphism 
between $SU(2)/S$ and $S^2$. Thinking of $p$ as a principal $S$-bundle over $S^2$ via this homeomorphism we have $(S(L), \pi, S^2)\cong (SU(2), p, S^2)$ where $L$ is used instead of $E$. From the formula of  ~\cite[p.\,40]{LS} we have 
$e([S(L), \Phi_L])=B_1/4$ where $\Phi_L$ denotes the trivialization of the stable tangent bundle of $S(L)$. In this case $\Phi_L$ coincides with $\mathscr{L}$ via the above isomorphism and so we have $e([SU(2),\mathscr{L}])=1/24$. 

For brevity we write $(r, zv)_R$ for $(1-2r^2, 2rzv)\in S^2$ and identify 
\[(r, zv)_R=p(R(rz, v))=R(r, zv) \] 
where $zv$ can be converted to 1 when $r=0$ because    
$R(0, zv)\,d(\overline{zv})=R(0, 1)$. 
Thus $(r, zv)_R$ represents $(0, 1)_R$ if $r=0$ and obviously $(1, 0)_R$ if $r=1$. The latter is assumed to be specified as the base point of $S^2$ for the reason that it is the image of the identity matrix of $SU(2)$ by $p$.

Suppose $m\ge 3$ and let $0\le j \le m-2$ and $1\le i \le m_j$ where 
$m_j=m-j-1$. Further we assume that the factors used in the product constructions are arranged in ascending order of their suffixes derived by $i, j$. For fixed $j$ we put       
\begin{equation*}
R_{i; j}(r_{i; j}z, v_{i; j})=
\begin{pmatrix} 
I_j & 0 & 0 & 0 & 0 \\ 
0 & r_{i; j}z & 0 & v_{i; j} & 0 \\ 
0 & 0 & I_{i-1} & 0 & 0 \\
0 & -\bar{v}_{i; j}& 0 & r_{i; j}\bar{z} & 0\\
0 & 0 & 0 & 0 & I_{m_j-i}
\end{pmatrix} \ \text{with} \ R(r_{i; j}z, v_{i; j})\in SU(2)
\end{equation*} 
where $I_s$ denotes the identity matrix of order $s$. Then we have  
\begin{equation}
\textstyle\prod_{i=1}^{m_j}R_{i; j} (r_{i; j}z, v_{i; j})=
\left(
\begin{array}{cccccc} 
I_j & 0 & 0 & 0 & \cdots & 0 \\ 
0 & a_{0; j} & b_{1; j} & b_{2; j} &\cdots & b_{m_j; j} \\ 
0 & a_{1; j} & c_{1, 1; j}  & c_{1, 2; j} & \cdots & c_{1, m_j; j}  \\
0 & a_{2; j} & 0& c_{2, 2; j} & \cdots & c_{2, m_j; j}  \\
\vdots & \vdots  & \vdots  &\ddots  & \ddots  &   \vdots\\
0 & a_{m_j; j} & 0 & \cdots & 0 & c_{m_j, m_j; j} 
\end{array}
\right)
\end{equation}
where
\begin{equation*}
\begin{split}
&a_{0; j}=r_{1; j}\cdots r_{m_j; j}z^{m_j},\\
&a_{s; j}=-r_{s+1; j}\cdots r_{m_j; j}z^{m_j-s}\bar{v}_{s; j} 
\quad (1\le s\le m_j-1), \hspace{35mm}\\ 
&a_{m_j; j}=-\bar{v}_{m_j; j},\\
\end{split}
\end{equation*}
\begin{equation*}
\begin{split}
&b_{1; j}=v_{1; j},\\
&b_{s; j}=r_{1; j}\cdots r_{s-1; j}z^{s-1}v_{s; j} \quad (2\le s\le m_j),\\ 
&c_{s, t; j}=-r_{s+1; j}\cdots r_{t-1; j}z^{t-s-1}
\bar{v}_{s; j}v_{t; j}\quad (1\le s\le t-2, \ 3\le t \le m_j),\hspace{7mm}\\ 
&c_{s, s+1; j}=-\bar{v}_{s; j}v_{s+1; j} \quad (1\le s\le m_j-1), \\
&c_{s, s; j}=r_{s; j}\bar{z} \quad (1\le s\le m_j), \\  
&c_{s, t; j}=0 \quad (s>t).\\
\end{split}
\end{equation*}

Let $d_j(z)=\mathrm{diag}(1, \overset{(j)}\ldots, 1, \bar{z}^{m_j}, z, \ldots, z)$. 
By referring to the above calculation we have 
\begin{equation}
\biggl(\textstyle\prod_{i=1}^{m_j}R_{i; j} (r_{i; j}z, v_{i; j})\biggr)
d_j(z)=\prod_{i=1}^{m_j}R_{i; j} (r_{i; j}, z^iv_{i; j}).
\end{equation}
The matrix representation of the right-hand side of this equation has a similar 
form to that of (1). In fact, restricting ourselves to the part we need for later 
use we have
\begin{equation}
\begin{split}
&\underline{a}_{0; j}=r_{1; j}\cdots r_{m_j; j}, \ \\
&\underline{a}_{s; j}=-r_{s+1; j}\cdots r_{m_j; j}\bar{w}_{s; j} \ \ 
(1\le s\le m_j-1), \hspace{45mm}\\ 
&\underline{a}_{m_j; j}=-\bar{w}_{m_j; j}, \\ 
\end{split}
\end{equation}
where $\underline{a}_{s; j}$ denotes its matrix element corresponding to 
$a_{s; j}$ and $w_{s; j}$ stands for $z^sv_{s; j}$.

Let $D_0(z)=I_m$ and $D_j(z)=\mathrm{diag}(z^{m-1}, \bar{z}, 
\overset{(j-1)}\ldots, \bar{z}, \bar{z}^{m-j}, 1, \ldots, 1)$ 
$(1\le j\le m-2)$. Clearly $D_j(z)d(z)(=d(z)D_j(z))=d_j(z)$. Define 
$R^{\{i\}}_j(r_{i; j}z, v_{i; j})$ to be the product
\[
\biggl(\textstyle\prod_{s=1}^{m_j}R_{s; j} (r_{s; j}z, v_{s; j})\biggr)D_j(z) \ \text{with} \ r_{s; j}=1 \ \text{for all} \ s, j \ \text{except} \ s=i.
\] 
Then by (2) we have
\begin{equation}
R^{\{i\}}_j(r_{i; j}z, v_{i; j})d(z)=R_{i; j}(r_{i; j}, z^{i}v_{i; j}), \qquad z\in S^1
\end{equation}
since $R_{s; j}(r_{s; j}, z^sv_{s j})=I_m$ if $r_{s; j}=1$. Hence putting 
\[
R_j(r_{i; j}z_i, v_{i; j})=\textstyle\prod_{i=1}^{m_j}
R^{\{i\}}_j(r_{i; j}z_i, v_{i; j}), \qquad z_i\in S^1.
\] 
we have due to the commutativeness of $d(z')$ and 
$R^{\{i\}}_j(r_{i; j}z, v_{i; j})$ $(z, z'\in S^1)$ 
\begin{equation}
R_j(r_{i; j}z_i, v_{i; j})d(z_1)\cdots d(z_{m_j})
=\textstyle\prod_{i=1}^{m_j}R_{i; j}(r_{i; j}, z_i^iv_{i; j})\qquad(j\ge 1).
\end{equation}
But in the case of $j=0$, differing from this, we have 
\begin{equation*}\tag{5b}
R_0(r_{i; 0}z_i, v_{i; 0})d(z_1)\cdots d(z_{m-1})
=R_{1; 0}(r_{1; 0}, z_1v_{1; 0})\textstyle\prod_{i=2}^{m-1}
R_{i; 0}(r_{i; 0}, z_{i-1}^mz_i^iv_{i; 0}).
\end{equation*}  
This can be shown using induction. In fact, as in (5), we first have 
\[R_0(r_{i; 0}z_i, v_{i; 0})d(z_{m-1})
=\biggl(\textstyle\prod_{s=1}^{m-2}R_{i; 0}(r_{i; 0}z_i, v_{i; 0})\biggr)
R_{m-1; 0}(r_{m-1; 0}, z_{m-1}^{m-1}v_{m-1; 0}).\]
By letting $d(z_{m-2})$ act on both sides of this it can be derived that  
\begin{equation*}
\begin{split}
R_0(r_{i; 0}z_i, &v_{i; 0})d(z_{m-1})d(z_{m-2})
= \biggl(\textstyle\prod_{i=1}^{m-3}R_{i; 0}(r_{i; 0}z_i, v_{i; 0})\biggr)\\
&R_{m-2; 0}(r_{m-2; 0}, z_{m-2}^{m-2}v_{m-2; 0})
R_{m-1; 0}(r_{m-1; 0}, z_{m-2}^mz_{m-1}^{m-1}v_{m-1; 0}).
\end{split}
\end{equation*}
Repeating this procedure we can achieve the equation (5b). Further 
these equations have the following extensions. 
\begin{equation}
\begin{split}
R^{\{i\}}_j(r_{i; j}zz', v_{i; j})d(z)&=R^{\{i\}}_j(r_{i; j}z'_, z^iv_{i; j})\quad 
(j\ge 0), \\
R_j(r_{i; j}z_iz'_i, v_{i; j})d(z_1)\cdots d(z_{m_j})&=
\begin{cases}
R_j(r_{i; j}z'_i, z_i^iv_{i; j}) &  (j\ge 1), \\
R_j(r_{i; j}z'_i, z_{i-1}^mz_i^iv_{i; j})& (j=0)\quad (\text{where} \ z_0=1).
\end{cases}
\end{split}
\end{equation}

From (4) we see that $p(R^{\{i\}}_j (r_{i; j}z, v_{i; j}))=p(R_{i; j}(r_{i; j}, z^iv_{i; j}))$ holds for $p : G\to G/S$.  Letting $S^2_{i; j}$ denote the 2-sphere consisting of all
$(r_{i; j}, w_{i; j})_R$, this permits us to embed $S^2_{i; j}$ in $G/S$ by means of the injective map $(r_{i; j}, w_{i; j})_R\to p(R^{\{i\}}_j (r_{i; j}z, v_{i; j}))$. From (6) we also see that
\[P^{\{i\}}_j=\{R^{\{i\}}_j (r_{i; j}z, v_{i; j})| (r_{i; j}, w_{i; j})_R\in S^2, z\in S^1|\}
\subset G\] is closed under the action of $S$.
Combining these two facts we have that 
$q_{i; j}=p|P^{\{i\}}_j : P^{\{i\}}_j\to S^2_{i; j}\subset G/S$ provides the projection 
map of a principal $S$-bundle over $S^2_{i; j}$. Then $q_{i; j}$ factors through the map $P^{\{i\}}_j\to SU(2)$ given by 
$R^{\{i\}}_j (r_{i; j}z, v_{i; j})\to R(r_{i; j}z, z^{i-1}v_{i; j})$ which yields an isomorphism $(P^{\{i\}}_j, q_{i; j}, S^2_{i; j})\cong (SU(2),  p, S^2)$. Let us write $L_{i; j}$ for the canonical complex line bundle over $S^2_{i; j}$ associated to $q_{i; j}$. Then since  
$(S(L_{i; j}), \pi, S^2_{i; j})\cong (P^{\{i\}}_j, q_{i; j}, S^2_{i; j})$ we have 
\begin{equation}
(S(L_{i; j}), \pi, S^2_{i; j})\cong (SU(2), p, S^2)\quad \ 
(\text{identifying} \ S^2_{i; j}=S^2).
\end{equation}

Let $(S_j^2)^{m_j}=S^2_{1; j}\times\cdots\times S^2_{m_j; j}$ and 
$(S^2)^{(m^2-m)/2 }=(S_0^2)^{m_0}\times\cdots\times (S_{m-2}^2)^{m_{m-2}}$. 
Then we have a map $\phi : (S^2)^{(m^2-m)/2 }\to G/S$ given by
\begin{equation*} 
x=(x_0, \ldots, x_{m-2})\to p\biggl(\textstyle\prod_{j=0}^{m-2}R_j(r_{i; j}, w_{i; j})\biggr)
\end{equation*}
where $x_j=(x_{1; j}, \ldots, x_{m_j; j})\in (S_j^2)^{m_j}$ with 
$x_{i; j}=(r_{i; j}, w_{i; j})_R\in S^2_{i; j}$. 
If we put $R(r_{i; j}z_i, v_{i; j})=\prod_{j=0}^{m-2}R_j(r_{i; j}z_i, v_{i; j})$, then  as 
in the case of $P^{\{i\}}_j$, using (5), (5b), (6) we see that
\[P=\{R(r_{i; j}z_i, v_{i; j}) | (r_{i; j}, w_{i; j})_R\in S^2_{i; j}, z_i\in S^1\}
\subset G\] forms the total space of a principal $S$-bundle endowed with the projection map $q : P\to (S^2)^{(m^2-m)/2}$ such that $\phi\circ q=p|P$. 
If we write $L^{(m^2-m)/2}=L_0^{m_0}\boxtimes\cdots\boxtimes 
L_{m-2}^{m_{m-2}}$ where $L_j^{m_j}=L_{1; j}\boxtimes\cdots\boxtimes L_{m_j; j}$,
then $(S(L^{(m^2-m)/2}), \pi, (S^2)^{(m^2-m)/2})\cong 
(P, q, (S^2)^{(m^2-m)/2})$ for the reason similar to (7). Therefore we have
\begin{equation}
(S(L^{(m^2-m)/2}), \pi, (S^2)^{(m^2-m)/2})\cong\phi^*(S(E), \pi, G/S), 
\end{equation}
where $\phi^*(-)$ denotes the induced bundle by $\phi$. 

Let $(S^2_{i; j})^\circ\in S^2_{i; j}$ be the subspace 
consisting of $(r_{i; j}, w_{i; j})_R$ with $r_{i; j}>0$, and put
$((S^2)^{m_j})^\circ=\prod_{i=1}^{m_j}(S^2_{i; j})^\circ$ and 
$((S^2)^{(m^2-m)/2})^\circ=\prod_{j=0}^{m-2}((S^2)^{m_j})^\circ$.  Then 

\begin{lem} The restriction of $\phi$ to $((S^2)^{(m^2-m)/2})^\circ$
is injective. 
\end{lem}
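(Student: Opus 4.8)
The plan is to prove injectivity by reconstructing, entry by entry, the coordinates of a point in the open region from its image under $\phi$. First I would record the concrete form of $\phi$. Evaluating the defining formula at $z_i=1$ and using $D_j(1)=I_m$ together with $R_{s;j}(1,0)=I_m$ (forced by $r^2+|v|^2=1$), each $R^{\{i\}}_j(r_{i;j},w_{i;j})$ collapses to $R_{i;j}(r_{i;j},w_{i;j})$, so that
\[
\phi(x)=p\bigl(M(x)\bigr),\qquad M(x)=\textstyle\prod_{j=0}^{m-2}\prod_{i=1}^{m_j}R_{i;j}(r_{i;j},w_{i;j})\in SU(m).
\]
Since $p$ is the quotient by the right action $g\mapsto g\,d(z)$, the hypothesis $\phi(x)=\phi(x')$ reads $M(x')=M(x)\,d(z)$ for some $z\in S^1$. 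The whole argument then splits into two tasks: showing that this forces $z=1$, and showing that $M(x)=M(x')$ forces $x=x'$.

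For the first task I would look at the first column. Every block $R_{i;j}$ with $j\ge 1$ fixes $e_1$, so the first column of $M(x)$ equals that of $\prod_{i=1}^{m-1}R_{i;0}$, namely $(\underline a_{0;0},\dots,\underline a_{m-1;0})^{T}$ with entries given by (3) for $j=0$. Right multiplication by $d(z)=\mathrm{diag}(\bar z^{m-1},z,\dots,z)$ scales this column by $\bar z^{m-1}$. In the open region the leading entry $\underline a_{0;0}=r_{1;0}\cdots r_{m-1;0}$ is a strictly positive real, and the same holds for $x'$; comparing the $(1,1)$-entries of $M(x')=M(x)d(z)$ yields $\bar z^{m-1}=\underline a'_{0;0}/\underline a_{0;0}>0$, hence $\bar z^{m-1}=1$ and the two first columns coincide. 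Because $r_{i;0}^2+|w_{i;0}|^2=1$, I can then recover $x_0$ from this column: the quantities $P_s=\prod_{i\ge s}r_{i;0}^2$ satisfy $P_m=1$ and $P_s=P_{s+1}-|\underline a_{s;0}|^2$, all $P_s>0$, so $r_{s;0}^2=P_s/P_{s+1}$ and the phase of $w_{s;0}$ is read off from $\underline a_{s;0}$. Thus $x_0=x'_0$.

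Next I would peel off the block $B_0=\prod_iR_{i;0}(r_{i;0},w_{i;0})$, which is now common to $M(x)$ and $M(x')$. Writing $N=B_0^{-1}M(x)=1\oplus\tilde N$ and $N'=B_0^{-1}M(x')=1\oplus\tilde N'$, where $\tilde N,\tilde N'\in SU(m-1)$ are the products of the remaining blocks on the coordinates $2,\dots,m$, the relation $M(x')=M(x)d(z)$ becomes $N'=N\,d(z)$; since $\bar z^{m-1}=1$ we have $d(z)=1\oplus zI_{m-1}$, so $\tilde N'=z\tilde N$. The $(1,1)$-entry of $\tilde N$ is the pivot $\underline a_{0;1}=r_{1;1}\cdots r_{m-2;1}>0$, and the same for $\tilde N'$, so comparing these entries gives $z=1$ (for $m=2$ this step is vacuous and $\bar z^{m-1}=\bar z=1$ already gives $z=1$). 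Hence $M(x')=M(x)$, and the identity $x=x'$ now follows by induction on $m$: the first column gives $x_0=x'_0$ as above, $\tilde N=\tilde N'$ is exactly the matrix of the $SU(m-1)$ parametrization, so the inductive hypothesis yields $(x_1,\dots,x_{m-2})=(x_1',\dots,x_{m-2}')$, the base case $m=2$ being the homeomorphism $SU(2)/S\cong S^2$ recorded above.

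The one genuinely delicate point is the elimination of the gauge parameter $z$. Because $d(z)$ scales the first column by $\bar z^{m-1}$ but every later column by $z$, no single column determines $z$; it is the two-stage use of the positivity of the pivots — $\underline a_{0;0}$ first, and $\underline a_{0;1}$ only after the block $B_0$ has been divided out — that makes the gauge trivial. Everything else is the bookkeeping of reading off $(r_{i;j},w_{i;j})$ from a column whose entries are, by (3), explicit monomials in the $r$'s times a single conjugated $w$.
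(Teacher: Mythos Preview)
Your proof is correct and follows the same inductive strategy as the paper: recover $x_0$ from the first column of the product matrix, cancel the block $B_0$, and repeat on the remaining $SU(m-1)$ factor. If anything you are more careful than the paper on one point --- the paper passes directly from $p(M(x))=p(M(x'))$ to the equality $M(x)=M(x')$ in $G$ (its equation $(\ast)$) without explicitly disposing of the gauge $d(z)$, whereas your two-stage use of the positive pivots $\underline a_{0;0}$ and then $\underline a_{0;1}$ makes this elimination transparent.
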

\begin{proof}  By definition 
$\phi(x)=p(\prod_{j=0}^{m-2}R_j(r_{i; j}, w_{i; j}))$ and  
$R_j(r_{i; j}, w_{i; j})=\textstyle\prod_{i=1}^{m_j}R_{i; j}(r_{i; j}, w_{i; j})$. 
Here $R_j(r_{i; j}, w_{i; j})$ can be written in the form 
$R_j(r_{i; j}, w_{i; j})= \mathrm{diag}(I_j, M_j)$ where the first column of $M_j$ 
consists of $\underline{a}_{0; j}$, $\ldots$, $\underline{a}_{m-j-1; j}$ given in (3).

Suppose $\phi(x)=\phi(x')$, namely $p(\prod_{j=0}^{m-2}R_j(r_{i; j}, w_{i; j}))
=p(\prod_{j=0}^{m-2}R_j(r'_{i; j}, w'_{i; j}))$ where we denote by attaching $ ' $ to an element accompanied by $x$ its corresponding element accompanied by $x'$. 
Due to the definition this can be interpreted as meaning that 
\begin{equation*}\tag{$\ast$}
R_0(r_{i; 0}, w_{i; 0})\cdots R_{m-2}(r_{i; m-2}, w_{i; m-2})
=R_0(r'_{i; 0}, w'_{i; 0})\cdots R_{m-2}(r'_{i; m-2}, w'_{i; m-2})
\end{equation*} 
with $w_{i; j}$ and $w'_{i; j}$ converted to 1 when $r_{i; j}=0$ and $r'_{i; j}=0$. 
This attached condition can be verified as in the case of $SU(2)$ as follows. Consider the latter equation of (6) with $z'_i=1$ and $v_{i; j}$ replaced by 
$w_{i; j}$ for all $i$. Let $r_{i; j}=0$ for $i=i_1, \ldots, i_s$. Then $|w_{i; j}|=1$ 
for $i=i_1, \ldots, i_s$, so taking $z_i=\bar{w}_{i; j}^{1/i}$ for $i=i_1, \ldots, i_s$ 
and $z_i=1$ except for these $z_i$, we have 
$R_j(r_{i; j}z_i, w_{i; j})d(z_1)\cdots d(z_{m_j})=R_j(r_{i; j}, z_i^iw_{i; j})$ 
where $R_{i; j}(r_{i; j}, w_{i; j})=R_{i; j}(0. 1)$ for $i=i_1, \ldots, i_s$. But in the case 
$j=0$, if $r_{i-1; 0}\ne 0$, then the above $z_i$ needs to be replaced 
by $z_i=w_{i-1; 0}^m\bar{w}_{i; j}^{1/i}$. Similarly for $w'_{s; j}$.

In view of the form of $R_j(r_{i; j}, w_{i; j})$ we have 
\[\underline{a}_{0; 0}=\underline{a}'_{0; 0}, \ldots, \underline{a}_{m-1; 0}=\underline{a}'_{m-1; 0}.\]
Substituting the result of (3) into these equations we have 
$r_{1; 0}\cdots r_{m-1; 0}=r'_{1; 0}\cdots r'_{m-1; 0}$, $r_{s+1; 0}\cdots r_{m-1; 0}\bar{w}_{s; 0}=r'_{s+1; 0}\cdots r'_{m-1; 0}\bar{w}'_{s; 0}$ $(1\le s\le m-2)$ and $\bar{w}_{m-1; 0}=\bar{w}'_{m-1; 0}$ where $r_{s; 0}^2+|w_{s; 0}|^2=1$ and 
$r_{s; 0}> 0$ by the assumption. We now proceed by induction on $i$ in reverse order. First from $\underline{a}_{m-1; 0}=\underline{a}'_{m-1; 0}$ 
we have $w_{m-1; 0}=w'_{m-1; 0}$ and so it follows that $r_{m-1; 0}=r'_{m-1; 0}$. Next from the second equation  
$\underline{a}_{m-2; 0}=\underline{a}'_{m-2; 0}$ we see that 
$r_{m-1; 0}w_{m-2; 0}=r'_{m-1; 0}w'_{m-2; 0}$. But since  
$r_{m-1; 0}=r'_{m-1; 0}> 0$ we have $w_{m-2; 0}=w'_{m-2; 0}$ and so similarly it follows that  $r_{m-2; 0}=r'_{m-2; 0}$. Repeating this procedure we obtain subsequently $w_{m-3; 0}=w'_{m-3; 0}$, $r_{m-3; 0}=r'_{m-3; 0}$, 
$w_{m-4; 0}=w'_{m-4; 0}$, $r_{m-4; 0}=r'_{m-4; 0}$,$\cdots$, $r_{2; 0}=r'_{2; 0}$ and 
finally we obtain $w_{1; 0}=w'_{1; 0}$, $r_{1; 0}=r'_{1; 0}$. At this point we conclude that $x_0=x'_0$ and so the equation $(\ast)$ above can be rewritten as
\[R_1(r_{i; 0}, w_{i; 0})\cdots R_{m-2}(r_{i; m-2}, w_{i; m-2})
=R_1(r'_{i; 0}, w'_{i; 0})\cdots R_{m-2}(r'_{i; m-2}, w'_{i; m-2})\]
with $w_{i; j}=1$ when $r_{i; j}=0$ and $w'_{i; j}=1$ when $r'_{i; j}=0$.
From this by looking at the form of $R_j(r_{i; j}, w_{i; j})$ again we have
$\underline{a}_{0; 1}=\underline{a}'_{0; 1}, \ldots, \underline{a}_{m-2; 1}=\underline{a}'_{m-2; 1}$.  Replacing the previous equations by these ones 
and the condition $r_{i; 0}> 0$ by $r_{i; 1}> 0$ in the argument there we can get $x_1=x'_1$, hence the rewritten assumption above is also rewritten as 
\[R_2(r_{i; 0}, w_{i; 0})\cdots R_{m-2}(r_{i; m-2}, w_{i; m-2})
=R_2(r'_{i; 0}, w'_{i; 0})\cdots R_{m-2}(r'_{i; m-2}, w'_{i; m-2})\]
with the same condition as above.
Repeating this step we obtain $x_2=x'_2$, $\ldots$, $x_{m-2}=x'_{m-2}$ via inductive reduction, which implies $x=x'$. This proves 
the lemma. 
\end{proof}

\section{Proofs of Theorem and Proposition}

For any $k\in\mathbb{Z}$ such that $1\le k\le m/2-1$ we put 
\[D(z_{2k-1}, zz_{2k})=\mathrm{diag}(1, \overset{(2k-1)}\ldots, 1, z_{2k-1}, 
\bar{z}_{2k-1}zz_{2k}, \bar{z}\bar{z}_{2k}, 1, \ldots, 1)\] 
for $z_{2k-1}, z_{2k}, z\in S^1$. 
Let $Q_k$ be the subspace of $G$ consisting of $D(z_{2k-1}, zz_{2k})d(\bar{z})$. Then it forms the total space of a principal $S$-bundle over $T_k=S^1\times S^1$ along with the projective map $p_k : Q_k\to T_k$ given by $D(z_{2k-1}, zz_{2k})d(\bar{z})\to (z_{2k-1}, zz_{2k})$ where $T_k$ is considered as a 
subspace of $G/S$ under $\psi_k : (z_{2k-1}, zz_{2k})\to p(D(z_{2k-1}, zz_{2k}))$. 

Put $z_{2k-1}=e^{\eta i}$ and 
$z_{2k}=e^{\theta i}$ for $0\le \eta, \, \theta < 2\pi$ and define a map 
$\mu : T_k\to S^2$ by 
\begin{equation*}
\mu(e^{\eta i}, ze^{\theta i}) = \left\{
\begin{array}{ll}
(\cos(\eta/2),  ze^{\theta i}\sin(\eta/2))_R & \ (0\le \eta\le \pi)\\
(-\cos(\eta/2),  ze^{\theta ti}\sin(\eta/2))_R & \ (\pi\le \eta< 2\pi), \ \ \ 
t=2-\eta/\pi. 
\end{array}
\right.
\end{equation*}
Then taking into account the fact that a principal circle bundle over $S^1$ is trivial we see that the classifying map of $p_k$ factors through $S^2$ where the restriction of $p_k$ to $\{1\}\times S^1\subset T_k$ is viewed as being trivial. 
Therefore we have 
\begin{lem}[cf.\,\cite{LS}, \!\S2, Example 3] 
$(Q_k, p_k, T_k)\cong \mu^*(SU(2), p, S^2)$ and also $\mu$ induces an 
isomorphism 
$H^2(S^2, \mathbb{Z})\cong H^2(T_k, \mathbb{Z})$ 
$(1\le k\le m/2-1)$.  
\end{lem}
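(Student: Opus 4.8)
The plan is to realize $p_k$ as a principal $S$-bundle whose classifying map factors through $\mu$, and then to compute the degree of $\mu$. First I would record that $p_k\colon Q_k\to T_k$ is a principal $S$-bundle over $T_k=S^1\times S^1$: the total space $Q_k$ is closed under the action $g\mapsto gd(z)$ and $p_k$ is the restriction of $p$, exactly as for the bundles $P^{\{i\}}_j$ of Section 2. Since a principal circle bundle over $S^1$ is trivial, the restriction of $p_k$ to each coordinate circle, and hence to the $1$-skeleton $S^1\vee S^1\subset T_k$, is trivial. Therefore the classifying map $T_k\to BS$ is null-homotopic on the $1$-skeleton and factors, up to homotopy, through the collapse $T_k\to T_k/(S^1\vee S^1)=S^2$.

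For part (a) the point is that this collapse is realized by $\mu$ and that the resulting bundle over $S^2$ is $(SU(2),p,S^2)$. I would check from the piecewise formula that $\mu$ carries the $1$-skeleton to the base point $(1,0)_R$: at $\eta=0$ one has $\cos(\eta/2)=1$ and $\sin(\eta/2)=0$, so $\mu=(1,0)_R$, while the reparametrization $t=2-\eta/\pi$ on the second branch forces $\eta\to 2\pi$ back to $(1,0)_R$, so that $\mu$ descends to $T_k$. To identify the bundle over $S^2$ I would construct an $S$-equivariant bundle map $Q_k\to SU(2)$ covering $\mu$, in analogy with the maps $R^{\{i\}}_j(r_{i;j}z,v_{i;j})\to R(r_{i;j}z,z^{i-1}v_{i;j})$ of Section 2; by the universal property of the pullback this yields $(Q_k,p_k,T_k)\cong\mu^*(SU(2),p,S^2)$. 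Alternatively one bypasses the explicit map, since a bundle over $S^2$ is determined by its degree, and matches first Chern classes once part (b) is known.

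For part (b) I would compute $\deg\mu$, that is, show that $\mu^*\colon H^2(S^2,\mathbb{Z})\to H^2(T_k,\mathbb{Z})$ is an isomorphism. After the substitution $(r,c)_R=(1-2r^2,2rc)$ the first branch $0\le\eta\le\pi$ becomes the standard spherical parametrization $(-\cos\eta,\,\sin\eta\,e^{\theta i})$ of $S^2$, with latitude $-\cos\eta$ and longitude $\theta$, and sweeps the sphere once. The second branch, controlled by $t=2-\eta/\pi$, unwinds the longitude as $\eta$ runs from $\pi$ to $2\pi$ and collapses the circle $\eta=2\pi$ to the base point. I would then evaluate the degree either by integrating $\mu^*\omega$ over $T_k$ for the normalized area form $\omega$, or by counting signed preimages of a regular value, and verify that the two branches combine to give $\pm1$.

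The hardest part is this degree computation, together with the compatibility it feeds into part (a). Because the second branch simultaneously collapses a circle and reparametrizes the longitude through $t=2-\eta/\pi$, one must check with care that $\mu$ is continuous and well defined on the torus and determine the precise contribution of this branch to $\deg\mu$; the signs and orientations have to be tracked so that the outcome is $\pm1$ rather than a cancelling or fractional value. Once $\deg\mu=\pm1$ is secured, the first Chern class of $\mu^*(SU(2),p,S^2)$ is a generator of $H^2(T_k,\mathbb{Z})$ and the equivariant bundle map covering $\mu$ identifies it with $(Q_k,p_k,T_k)$, so that part (a) follows and part (b) is the statement $\deg\mu=\pm1$ itself.
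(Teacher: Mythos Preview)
Your strategy is the paper's: for part (a) exhibit an explicit $S$-equivariant map $\tilde{\mu}\colon Q_k\to SU(2)$ covering $\mu$ (the paper simply writes down the piecewise formula for $\tilde{\mu}$ and checks nothing further), and for part (b) read off $\deg\mu=\pm 1$ from the definition (the paper calls this ``immediate'').

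One point in your motivation needs correcting. The map $\mu$ does \emph{not} carry the full $1$-skeleton $S^1\vee S^1$ to the base point: on the circle $\theta=0$ the first branch gives $(\cos(\eta/2),\sin(\eta/2))_R$, which traces an arc, not a point. What your check at $\eta=0$ and $\eta\to 2\pi$ actually establishes is only that $\mu$ is well defined on the torus and collapses the single circle $\{1\}\times S^1$ (which is exactly what the paper uses in the sentence preceding the lemma). So $\mu$ is not the quotient map $T_k\to T_k/(S^1\vee S^1)$, and the sentence ``this collapse is realized by $\mu$'' is false as stated. This does not damage the proof, since your explicit bundle map (and the alternative Chern-class matching once $\deg\mu=\pm 1$ is known) bypasses that heuristic entirely.
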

\begin{proof}
In order to prove the first equation it suffices to show that there is 
a bundle map covering 
$\mu$. 
Let $\tilde{\mu} : Q_k\to SU(2)$ be the map given by 
\begin{equation*}
\tilde{\mu}(D(e^{\eta i}, ze^{\theta i})d(\bar{z}))= \left\{
\begin{array}{ll}
R(\bar{z}\cos(\eta/2),  e^{\theta i}\sin(\eta/2)) & \ (0\le \eta\le \pi)\\
R(-\bar{z}\cos(\eta/2),  e^{\theta ti}\sin(\eta/2)) & \ (\pi\le\eta<2\pi), \ \ \ 
t=2-\eta/\pi, 
\end{array}
\right.
\end{equation*}
which provides the required bundle map. The second equation is immediate 
from the definition of $\mu$.
\end{proof}

Let $m=2n$ or $2n+1$ and put $T^{n-1}=T_1\times\cdots\times T_{n-1}$. 
Let $\phi_j : x_j \to p(R_j(r_{i; j}, w_{i; j}))$ and $\psi_k : y_k \to p(D(z_{2k-1}, \zeta_{2k}))$ be the injective maps of $(S_j^2)^{m_j}$ and $T_k$ into 
$G/S$, respectively, described above where $x_j=(x_{1; j}, \cdots, x_{m_j; j})$ with  
$x_{i; j}=(r_{i; j}, w_{i; j})_R\in S^2_{i; j}$ and $y_k=(z_{2k-1}, \zeta_{2k})\in T_k$.  
Then by putting for $x=(x_0, \cdots, x_{m-2})$ and $y=(y_1, \cdots, y_{n-1})$ 
\[
\psi(x, y)=p\biggl(\textstyle\prod_{j=0}^{m-2}R_j(r_{i; j}, w_{i; j}))
\textstyle\prod_{k=1}^{n-1}D(z_{2k-1}, \zeta_{2k})\biggl)
\]
we have a map of  $(S^2)^{(m^2-m)/2}\times T^{n-1}$ into $G/S$. 
Let us put 
\[\tilde{R}(r_{i; j}z_i, v_{i; j}; z_{2k-1}, z'_kz_{2k}) =\textstyle\prod_{j=0}^{m-2}R_j(r_{i; j}z_i, v_{i; j})\prod_{k=1}^{n-1}D(z_{2k-1}, z'_kz_{2k})d(\bar{z}'_k).\] 
Here we see by definition that the product terms in the second-half of the right-hand side satisfy $d(\bar{z}')D(z_{2k-1}, z{z'}^2z_{2k})d(\bar{z})
=D(z_{2k-1}, zz_{2k})d(\bar{z})d(\bar{z}')$. Using these equations together with the 
ones of (5), (5b) and (6), as in the case of $P$ above, we see that 
\[\tilde{P}=\{\tilde{R}(r_{i; j}z_i, v_{i; j}; z_{2k-1}, z'_kz_{2k}) | (r_{i; j}, w_{i; j})_R\in 
S^2_{i; j}, z'_k, z_{2k-1}, z_{2k}\in S^1\}
\subset G\] forms the total space of a principal $S$-bundle endowed with the projection map $\tilde{q} : \tilde{P}\to (S^2)^{(m^2-m)/2}\times T^{n-1}$ such that 
$\psi\circ \tilde{q}=p|\tilde{P}$. Let $\tilde{L}_k$ be the complex line bundle over $T_k$ associated to $p_k$ and  
$\tilde{L}^{n-1}=\tilde{L}_1\boxtimes\cdots\boxtimes\tilde{L}_{n-1}$. Then putting 
$\tilde{E}=L^{(m^2-m)/2}\boxtimes\tilde{L}^{n-1}$ we see that its unit sphere bundle $S(\tilde{E})$ is isomorphic to $\tilde{q}$ and therefore by combining with the equation of (8) we have  
\begin{equation}
(S(\tilde{E}), \pi, (S^2)^{(m^2-m)/2}\times T^{n-1})
\cong \psi^*(S(E), \pi, G/S).
\end{equation}

Putting $(T_k)^\circ=T_k-\{1\}\times S^1$ we write
$(T^{n-1})^\circ=(T_1)^\circ\times\cdots\times (T_{n-1})^\circ$. Then
\begin{lem} The restriction of $\psi$ to 
$((S^2)^{(m^2-m)/2})^\circ\times (T^{n-1})^\circ$ is injective. 
\end{lem}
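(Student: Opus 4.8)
The plan is to run the same strategy used to prove the injectivity of $\phi$ (Lemma 1), now carrying the extra torus factors $D(z_{2k-1},\zeta_{2k})$ along, and to dispose of these factors by invoking Lemma 2. First I would unwind the hypothesis. Suppose $\psi(x,y)=\psi(x',y')$. By the definitions of $\psi$ and of $p$ this means
\[
\textstyle\prod_{j=0}^{m-2}R_j(r_{i;j},w_{i;j})\prod_{k=1}^{n-1}D(z_{2k-1},\zeta_{2k})
=\Bigl(\textstyle\prod_{j=0}^{m-2}R_j(r'_{i;j},w'_{i;j})\prod_{k=1}^{n-1}D(z'_{2k-1},\zeta'_{2k})\Bigr)d(u)
\]
for some $u\in S^1$, where, exactly as in the verification of the condition attached to $(\ast)$, each $w_{i;j}$ (resp.\ $w'_{i;j}$) is normalised to $1$ whenever $r_{i;j}=0$ (resp.\ $r'_{i;j}=0$); on the open locus $((S^2)^{(m^2-m)/2})^\circ\times(T^{n-1})^\circ$ all $r_{i;j}>0$, so this normalisation is vacuous. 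The decisive structural remark is that the trailing factors $\prod_k D(z_{2k-1},\zeta_{2k})$ and $d(u)$ are all diagonal, so right multiplication by them merely rescales columns by unit scalars; in particular it preserves every entrywise modulus, it leaves the first column of the left side untouched, and it scales the first column of the right side by $d(u)_{11}=\bar u^{\,m-1}$.

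Next I would recover the sphere data $x$ by the reverse induction of Lemma 1. Since the first diagonal entry of each $D(z_{2k-1},\zeta_{2k})$ is $1$, the first column of the left-hand side is precisely the first column $(\underline a_{0;0},\dots,\underline a_{m-1;0})$ of $R_0$, whose top entry $\underline a_{0;0}=r_{1;0}\cdots r_{m-1;0}>0$ is real and positive. Comparing with the right-hand side forces $\bar u^{\,m-1}=1$ and then $\underline a_{s;0}=\underline a'_{s;0}$ for all $s$, whence the reverse induction on $i$ used for $(\ast)$ gives $x_0=x'_0$. Cancelling the common invertible factor $R_0$ and iterating over $j=1,\dots,m-2$ — at stage $j$ the first column of the reduced matrix is again the monomial column $(\underline a_{s;j})$ of $R_j$, now multiplied by a single diagonal phase — yields $x_j=x'_j$ at each step, hence $x=x'$ and $\prod_j R_j(r_{i;j},w_{i;j})=\prod_j R_j(r'_{i;j},w'_{i;j})=:A$.

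Finally I would recover the torus data $y$. After cancelling $A$, the identity collapses to a relation $\prod_k D(z_{2k-1},\zeta_{2k})=\bigl(\prod_k D(z'_{2k-1},\zeta'_{2k})\bigr)d(u)$ between diagonal matrices, i.e.\ a relation between their diagonal phases, from which the parameters $z_{2k-1}$ and $\zeta_{2k}$ are read off through successive partial products. Here I would appeal to Lemma 2: the map $\mu$ realises $(Q_k,p_k,T_k)$ as $\mu^*(SU(2),p,S^2)$ and is injective on $(T_k)^\circ=T_k-\{1\}\times S^1$, the deleted circle $\{z_{2k-1}=1\}$ being exactly the locus that $\mu$ collapses. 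Using this together with the relation $d(\bar z')D(z_{2k-1},z{z'}^{2}z_{2k})d(\bar z)=D(z_{2k-1},zz_{2k})d(\bar z)d(\bar z')$ and the equations (5), (5b), (6) that were used to build $\tilde P$, the open conditions $z_{2k-1}\ne1$ pin the residual scalar (already constrained by $\bar u^{\,m-1}=1$) down to $u=1$ and force $(z_{2k-1},\zeta_{2k})=(z'_{2k-1},\zeta'_{2k})$ for each $k$, that is, $y=y'$.

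The step I expect to be the main obstacle is precisely this last one: controlling the residual $S$-phase $u$. Because right multiplication by $d(u)$ rescales \emph{all} columns beyond the first by the common factor $u$, that phase couples to every torus factor simultaneously, so the positivity argument that trivialised the phase on the sphere side (where the real positive entries $\underline a_{0;j}$ are available) is not directly available on the torus side. The content of the lemma is that the open conditions defining $(T^{n-1})^\circ$ — equivalently the injectivity half of Lemma 2 — are exactly what is needed to break this coupling and exclude a nontrivial root-of-unity phase; making that implication precise, rather than the bookkeeping of the inductive reduction, is where the work lies.
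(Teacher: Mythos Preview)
Your plan diverges from the paper's argument at the organisational level. The paper does not separate the recovery of the sphere data $x$ from the torus data $y$, and it does not invoke Lemma~2 at any point. Instead it \emph{interleaves} the two recoveries: at stage $j$ the relevant column of $\prod_\ell R_\ell\cdot\prod_k D$ is the $(j{+}1)$st, which equals the column $(\underline a_{0;j},\dots,\underline a_{m_j;j})^T$ of $R_j$ scaled by the single diagonal phase $(\prod_k D)_{j+1,\,j+1}$ --- namely $z_1$ at $j=1$, $\bar z_1\zeta_2$ at $j=2$, $\bar\zeta_2 z_3$ at $j=3$, and so on. Since $\underline a_{0;j}=r_{1;j}\cdots r_{m_j;j}>0$ on the open locus, the paper reads that phase off from the top entry and equates it with its primed counterpart, obtaining $z_1=z'_1$, then $\zeta_2=\zeta'_2$, etc., while the remaining entries give $x_j=x'_j$ exactly as in Lemma~1. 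So in the paper's scheme each torus coordinate is extracted by the same positivity trick that handles the spheres, one level at a time; there is no residual diagonal equation left over at the end.

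Your proposed final step, by contrast, has a real gap. After cancelling $A=A'$ you are left with
\[
\textstyle\prod_k D(z_{2k-1},\zeta_{2k})=\bigl(\prod_k D(z'_{2k-1},\zeta'_{2k})\bigr)\,d(u),\qquad u^{m-1}=1,
\]
and the open conditions $z_{2k-1}\neq 1$ do \emph{not} force $u=1$: for any $(m{-}1)$st root of unity $u$ this system is solved by $z_{2k-1}=u^{\,2k-1}z'_{2k-1}$ and $\zeta_{2k}=u^{\,2k}\zeta'_{2k}$, with both sides lying in $(T^{n-1})^\circ$ for generic choices. Lemma~2 gives no leverage here --- it identifies $(Q_k,p_k,T_k)$ with $\mu^*(SU(2),p,S^2)$ and records that $\mu^*$ is an $H^2$-isomorphism, but $\mu:T_k\to S^2$ is certainly not injective on $(T_k)^\circ$ (a torus cannot embed in a sphere), so it carries no injectivity information about the coordinates $(z_{2k-1},\zeta_{2k})$. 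In short, by postponing the torus variables you defer precisely the information that the paper's interleaved extraction consumes at each step, and the tool you propose to recover it (Lemma~2) is not suited to the job.
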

\begin{proof}
In terms of the notation of the proof of Lemma 1 we prove that if we suppose 
that $\psi(x, y)=\psi(x', y')$, then it follows that 
$(r_{i; j}, w_{i; j})_R=(r'_{i; j}, w'_{i; j})_R$ $(0\le j\le m-2)$ and 
$(z_{2k-1}, \zeta_{2k})=(z'_{2k-1}, \zeta'_{2k})$ $(1\le k\le n-1)$, the latter of which is the part added to the proof of injectivity of the restriction map of $\phi$. The proof can be proceeded along the same lines as in the proof of Lemma 1 based on the result there. But in fact there are the following changes in the use of the elements $\bar{w}_{m_j; j}$ each of which is a starting point of the proof for 
$\phi_j$:
$\bar{w}_{m-1; 0}\to \bar{w}_{m-1; 0}$ (i.e. no change), $\bar{w}_{m-2: 1}\to z_1\bar{w}_{m-2: 1}$,  $\bar{w}_{m-3: 2}\to (\bar{z}_1\zeta_2)\bar{w}_{m-3: 2}$ 
and in general $\bar{w}_{m-2k-2; 2k+1} \to 
(z_{2k+1}\bar{\zeta}_{2k})\bar{w}_{m-2k-2; 2k+1}$, 
$\bar{w}_{m-2k-3; 2k+2} \to (\bar{z}_{2k+1}\zeta_{2k+2})\bar{w}_{m-2k-3; 2k+2}$
for $k=1, \ldots, n-2$. Similarly substituting these elements into the equations 
$\underline{a}_{0; j}=\underline{a'}_{0; j}$, $\underline{a}_{1; j}=\underline{a'}_{1; j}$, 
$\ldots$, $\underline{a}_{m-j-1; j}=\underline{a'}_{m-j-1; j}$ we can obtain 
$(x, y)=(x', y')$.  

First from $\bar{w}_{m-1; 0}=\bar{w}'_{m-1; 0}$ 
we have $x_0=x'_0$, which shows that $z_1\bar{w}_{m-2: 1}=z'_1\bar{w}'_{m-2: 1}$,  
so we have $z_1r_{m-2: 1}=z'_1r'_{m-2: 1}$. Since $r_{m-2: 1}>0$, from the 
second equation it follows that $r_{m-2: 1}=r'_{m-2: 1}$ and so $z_1=z'_1$. 
This concludes that $x_1=x'_1$.  $z_1=z'_1$, 
Next from $(\bar{z}_1\zeta_2)\bar{w}_{m-3: 2}=(\bar{z}'_1\zeta'_2)\bar{w}'_{m-3: 2}$ 
we have $\zeta_2\bar{w}_{m-3: 2}=\zeta'_2\bar{w}'_{m-3: 2}$ and so it follows that $x_2=x'_2$, $\zeta_2=\zeta'_2$. Therefore by  
$(z_3\bar{\zeta}_2)\bar{w}_{m-4; 3}=(z'_3\bar{\zeta}'_2)\bar{w}'_{m-4; 3}$ we have
$z_3\bar{w}_{m-4; 3}=z'_3\bar{w}'_{m-4; 3}$ and so in a similar way to the previous  case we have $x_3=x'_3$, $z_3=z'_3$. Subsequently also by $(\bar{z}_3\zeta_4)
\bar{w}_{m-5; 4}= (\bar{z}'_3\zeta'_4)\bar{w}'_{m-5; 4}$ we have 
$\zeta_4\bar{w}_{m-5; 4}= \zeta'_4\bar{w}'_{m-5; 4}$ and  so it follows that 
$x_4=x'_4$, $\zeta_4=\zeta'_4$. 
At this point we can conclude that $x_1=x'_1$, $x_2=x'_2$, $x_3=x'_3$, 
$x_4=x'_4$ and $y_1=y'_1$, $y_2=y'_2$. These tell us that through repetition 
of these procedures we can arrive at the desired result.
\end{proof}
\begin{proof}[Proof of Theorem]
Let $m=2n$ and put $M=(S^2)^{(m^2-m)/2}\times T^{n-1}$. Then obviously 
$\dim M=\dim G/S=4n^2-2$. By construction we know ~\cite{H} that 
$\psi$ can be deformed to an onto map. Taking into account the injectivity result given in Lemma 3 together with this fact we can conclude that 
$\psi$ is a degree one map, that is, $\psi_*([M])=[G/S]$ where 
$[ \ ]$ denotes the fundamental class. So we have
\[
\langle c_1(E)^{2n^2-1}, \, [G/S]\rangle=\langle (c_1(\psi_*E)^{2n^2-1}), \, [M]\rangle. 
\]
Hence by (9) 
\begin{equation*}
\begin{split} 
\langle c_1(E)^{2n^2-1}, \, [G/S]\rangle
&=\langle c_1(L^{2n^2-n}\hat\otimes\tilde{L}^{n-1} ), \, [M] \rangle\\
&=\langle  c_1(L^{2n^2-n}), \, [(S^2)^{(m^2-m)/2}] \rangle\cdot
\langle c_1(\tilde{L}^{n-1}), \, [T^{n-1}]\rangle
\end{split}
\end{equation*}
Substituting this into the equation of Proposition 2.1 of ~\cite{LS} we obtain
\begin{equation*}
e_{\mathbb C}([S(E), \Phi_E])=(-1)^{n-1}B_{n^2}/2n^2
\end{equation*}
where $\Phi_E$ denotes  the trivialization of the stable tangent space of $S(E)$ derived by the framing $\mathscr{L}_S$ on $G/S$ induced by $\mathscr{L}$. 

Now according to the above definition the element $(z_{2k-1}, \zeta_{2k})\in T_k$ 
$(1\le k \le n-1)$ represents $p(D(z_{2k-1}, \zeta_{2k}))$ in $G/S$ via 
$\psi_k$ where $p : G\to G/S$ and also we know that 
$D(z_{2k-1}, \zeta_{2k})$ can be  written as \[D(z_{2k-1}, \zeta_{2k})=R_{1; 2k-1}(z_{2k-1}, 0)R_{1; 2k}(\zeta_{2k}, 0)\] 
and that 
\[
p(R_{1; 2k-1}(z_{2k-1}, 0)R_{1; 2k}(\zeta_{2k}, 0))=\phi(\ast)
\] 
where $\phi$ is as above and $*=((1, 0)_R, \ldots, (1, 0)_R)\in (S^2)^{(m^2-m)/2}$.
From these relations we find that $\Phi_E$ corresponds to 
$\mathscr{L}$ which is trivialized over $T_k\subset G$ for $1\le k\le n-1$, namely that if we let $\rho$ denote the realification of the standard complex representation $G\to GL(m, \mathbb{C})$, then $\Phi_E$ can be taken to be 
$\mathscr{L}^{(n-1)\rho}$. This proves the theorem.
\end{proof}
\begin{proof}[Proof of Proposition] 
Let $m=2n+1$. Let $C$ be as above. Then clearly $S\cap C=\{I_{2n+1}\}$, so $S\times C$ becomes a subgroup of $G$. Letting $r : G\to G/C$ and 
$\tilde{r} : G/C\to G/(S\times C)$ be the quotient maps, the pair $(r, \tilde{r})$ gives a bundle map between principal $S$-bundles $G\to G/S$ and 
$G/C\to G/(S\times C)$. Consider the composition 
$r\circ\iota : \tilde{P}\hookrightarrow G\to G/C$ and 
$\tilde{r}\circ\psi : (S^2)^{(m^2-m)/2}\times T^{n-1} \to G/S\to G/(S\times C)$ 
where $\iota$ denotes the inclusion of $\tilde{P}$ into $G$. Then clearly 
$(\tilde{r}\circ\psi)\circ\tilde{q}=\tilde{r}\circ(r\circ\iota)$ holds and $\dim((S^2)^{(m^2-m)/2}\times T^{n-1})=\dim G/(S\times C)$. 
Following the proof of Lemma 3 we can see that $\tilde{r}\circ\psi$ is a degree one map. Therefore by replacing $\psi$ by 
$\tilde{r}\circ\psi$ in the proof of the theorem above we can get a modified  
form in the case of $m=2n+1$. This proves the proposition.
\end{proof}

\begin{remark} From the proof of the theorem we see that
\[e_\mathbb{C}([SU(2n), \mathscr{L}])=0.\]
\!\!~\cite{AS}. \!\!The doubling of the framing occurred there can be dissolved by thinking of the restriction of $E$ to $T_k$ for every $k$ as a trivial complex line bundle. But instead its first Chern class becomes zero and so, according to Proposition 2.1 of ~\cite{LS}, the value of $e_\mathbb{C}$ must become zero. 
\end{remark}

\end{document}